\newtheorem{theorem}{Theorem}[section]
\newtheorem{lemma}[theorem]{Lemma}
\newtheorem{corollary}[theorem]{Corollary}
\newtheorem{proposition}[theorem]{Proposition}
\theoremstyle{definition}
\newtheorem{definition}[theorem]{Definition}
\newtheorem{example}[theorem]{Example}
\theoremstyle{remark}
\newtheorem{remark}[theorem]{Remark}
\numberwithin{equation}{section}
\def\C{{\mathbb C}}
\def\P{{\mathbb P}}
\def\R{{\mathbb R}}
\def\Z{{\mathbb Z}}
\def\Aff{\textrm{Aff}}
\newcommand{\De}{\Delta}
\newcommand{\GL}{\textrm{GL}}
\begin{document}

\title{A Lower Bound of the Hofer-Zehnder Capacity via Delzant Polytopes}


\author{Yichen Liu}
\address{Mathematics Department\\
	University of Illinois at Urbana-Champaign\\Champaign, Illinois 61801}
\curraddr{}
\email{yichen23@illinois.edu}
\thanks{}

\subjclass[2020]{Primary 53D20, 53D35}

\date{}

\begin{abstract}
Given a symplectic toric manifold, the moment maps of sub-circle actions can be modified to be admissible functions in the sense of Hofer-Zehnder. By exploiting the relationship between the period of Hamiltonian sub-circle actions of a symplectic toric manifold and its Delzant polytope, we develop an invariant of Delzant polytopes which gives a lower bound of the Hofer-Zehnder capacity.
\end{abstract}

\maketitle

\section{Introduction}
In 1980s, Gromov~\cite{GR85} proved the celebrated non-squeezing theorem which states that a ball $B^{2n}(r)$ of radius $r$ can be symplectically embedded into a cylinder $B^2(R) \times \R^{2n-2}$ of radius $R$ if and only if $r \leq R$. This led to the notion of the \textit{Gromov width} of a symplectic manifold $(M,\omega)$, which is the radius of the largest ball of the same dimension that can be symplectically embedded into $M$. The Gromov width is the first example of a symplectic capacity. In~\cite{EH89}, Ekeland and Hofer defined symplectic capacities in an axiomatic way and constructed many concrete examples of symplectic capacities. Since then, symplectic capacities play an important role in the study of the symplectic geometry. 

Among symplectic capacities, the Hofer-Zehnder capacity, introduced in \cite{HZ90}, is based on properties of periodic orbits of Hamiltonian flows on $(M, \omega)$. For a smooth function $H$ on $M$, let $X_H$ denote its Hamiltonian vector field. For a compact symplectic manifold $(M,\omega)$, we say $H: M \to \R$ is \textbf{admissible} if 
\begin{itemize}
    \item there exist open sets $U,V$ such that $H|_U = H_{\max}$ and $H|_V = H_{\min}$, and
    \item $X_H$ has no non-constant periodic orbits of period less than $1$.
\end{itemize}
We denote the set of admissible functions by $\mathcal{H}_{ad}(M,\omega) = \{ H \in C^\infty(M)$ \big| $H$ is admissible$\}$. The \textbf{Hofer-Zehnder capacity} of $(M,\omega)$ is defined by \[c_{\textrm{HZ}}(M,\omega) = \textrm{sup} \{H_{\max} -H_{\min} \big| \, H \in \mathcal{H}_{ad}(M, \omega) \}. \]

Recall that we say a circle $S^1 = \R / \Z$ acts on a symplectic manifold $(M,\omega)$ {\bf in a Hamiltonian fashion} if there exists a map $H: M \to \R$ such that $dH = \omega(X, \cdot)$, where $X$ is the fundamental vector field of the $S^1$-action. Such an $H$ is called a {\bf moment map}. Moment maps can be modified to admissible functions (see Lemma~\ref{lemma:modification}). Hence, by studying $S^1$-actions on a symplectic manifold, we can estimate the Hofer-Zehnder capacity using the information from the moment maps. Moreover, with the presence of group actions, one can also study equivariant capacities introduced in \cite{FPP18}. The equivariant technique used in this paper is similar to those in \cite{Density}, \cite{Pelayo06}, where the authors obtained bounds for equivariant capacities. In fact, the authors computed packing densities, which easily determines the equivariant capacities according to \cite[Equation (2)]{FPP18}. For more discussion about equivariant capacities, see \cite{Density}, \cite{FPP18}, \cite{Pelayo06},\cite{Pelayo07},\cite{PS08} and the references therein.

A symplectic toric manifold $(M^{2n},\omega,T^n,\Phi)$ is a connected, closed $2n$-dimensional symplectic manifold equipped with an effective, Hamiltonian $n$-torus action (see Definition~\ref{def:Hamiltonian}). Delzant \cite{De88} classified symplectic toric manifolds by their moment polytopes which are Delzant polytopes (see Definition~\ref{def:Delzant}). These manifolds admit many different $S^1$-actions and the information of actions is encoded by simple combinatorial data, so it is possible to get a reasonably good estimate of the Hofer-Zehnder capacity from the combinatorial data. The main goal of this paper is to describe an explicit algorithm for how to find a lower bound of the Hofer-Zehnder capacity by simply using the information obtained from the Delzant polytope.

In \cite{Lu}, Lu also gave a lower bound of the Hofer-Zehnder capacity of a toric manifold from the combinatorial data of the Delzant polytope. Lu's technique is to find symplectic embeddings of balls whose radius is related to the combinatorial data and use the fact that the Hofer-Zehnder capacity is always bounded from below by the Gromov width, while our proof depends on a careful analysis of how to read the maximal (in terms of the cardinality) stabilizer group of non-fixed points from the combinatorial data.

Let $(M^{2n},\omega,T^n,\Phi)$ be a symplectic toric manifold with the Delzant polytope $\Delta = \Phi(M)$. We can write $\De$ as an intersection of half-spaces
\[\Delta = \bigcap_{i=1}^d \{ x \in \mathbb{R}^n \big| \, \langle x, v_i \rangle \leq \lambda_i \},\]
where $v_i \in \Z^n$ are primitive outward-pointing normal vectors. (Recall that a vector $v \in \Z^n$ is primitive if for any $u \in \Z^n$, the equation $v= nu$ for $n \in \Z$ implies that $n = \pm 1$.)

Let $E$ denote an edge of $\De$. Delzant's classification~\cite{De88} implies that there exists a unique $J_E \subset \{1,2,....,d\}$ with $|J_E| = n-1$ such that
\[E = \Delta \cap \bigcap_{j \in J_E}  \{ x \in \mathbb{R}^n \big| \, \langle x, v_j \rangle = \lambda_j \}.  \]

Let $u \in \Z^n$ be a primitive vector. To each edge $E$ of $\De$, we associate a lattice polytope $P^u_E =\{tu-\sum_{j\in J_E}t_jv_j \big| \, 0\leq t,t_j \leq 1\}$, i.e. $P^u_E$ is the convex hull of $\{-v_j: j \in J_E\} \cup \{u\}$.
Let $k^u_E: = |\Z^n \cap \textrm{int}(P^u_E)|+1$ denote the number of interior lattice points plus one. Notice that if $u$ is a linear combination of $v_j$, then $k^u_E = 1$.

Define \[m_u(\De) := \max_E \{k^u_E \big| \, E \textrm{ is an edge of } \De\},\] 
\[\mathcal{T}_u(\De) := \frac{1}{m_u(\De)} (\smash{\max_{x\in \Delta}} \langle x, u \rangle - \smash{\min_{x\in \Delta}} \langle x, u \rangle).\]

The \textbf{toric width} of a Delzant polytope $\Delta \subset \R^n$ is
\[w_T(\De) := \sup_u \{\mathcal{T}_u(\De) \big| \, u \in \Z^n \textrm{ is a primitive vector} \}.\]

Our main result states that for a symplectic toric manifold, the toric width of its Delzant polytope gives a lower bound of the Hofer-Zehnder capacity.

\begin{theorem}\label{thm:main}
    Let $(M^{2n},\omega,T^n,\Phi)$ be a symplectic toric manifold with a Delzant polytope $\De = \Phi(M)$. Let $w_T(\De)$ be the toric width of $\De$, then
    \[c_{\textrm{HZ}}(M,\omega) \geq w_T(\De). \] 
\end{theorem}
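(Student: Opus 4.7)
The plan is to produce, for each primitive $u \in \Z^n$, a family of admissible functions whose oscillation approaches $\mathcal{T}_u(\De)$, and then take the supremum over $u$. To this end, a primitive $u \in \Z^n$ determines a sub-circle $S^1_u \subset T^n$ via $t \mapsto \exp(2\pi i t u)$; restricting the $T^n$-action to $S^1_u$ yields a Hamiltonian $S^1_u$-action whose moment map is $\Phi_u := \langle \Phi, u\rangle$. Since $\Phi(M) = \De$, the oscillation of $\Phi_u$ equals $\max_{x \in \De}\langle x, u\rangle - \min_{x \in \De}\langle x, u\rangle$, so $\mathcal{T}_u(\De)$ is exactly this oscillation divided by $m_u(\De)$.

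Next, I would identify $m_u(\De)$ with the maximal order of finite isotropy of the $S^1_u$-action, so that the minimal period of a non-constant periodic orbit of $X_{\Phi_u}$ is precisely $1/m_u(\De)$. By Delzant's theorem, the $T^n$-stabilizer of a point $p \in \Phi^{-1}(\textrm{int}(F))$ lying over a face $F$ of $\De$ is the subtorus $T_F \subset T^n$ whose Lie algebra is $\R\langle v_i : i \in I_F\rangle$, where $I_F$ indexes the facets containing $F$. Hence the $S^1_u$-stabilizer at $p$ is $S^1_u \cap T_F$: this equals all of $S^1_u$ (so $p$ is fixed) precisely when $u \in \R\langle v_i : i \in I_F\rangle$, and is otherwise a finite cyclic group whose order is computed from the lattice $\Z u + \Z\langle v_i : i \in I_F\rangle \subset \Z^n$. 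Because this isotropy grows as $F$ shrinks (since $T_F \subset T_{F'}$ whenever $F' \subset \overline{F}$), the maximal finite stabilizer is attained on some edge $E$; a direct lattice-index calculation then identifies this maximum with $k^u_E$, and the maximum over edges is $m_u(\De)$.

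With this in hand, I would invoke Lemma~\ref{lemma:modification} to modify $\Phi_u$ into admissible functions: for any $\varepsilon > 0$, one can choose a nondecreasing smooth $f : \R \to \R$ that is constant on open neighborhoods of $\max \Phi_u$ and $\min \Phi_u$, satisfies $0 \le f' \le 1/m_u(\De)$ everywhere, and achieves $f(\max \Phi_u) - f(\min \Phi_u) \ge \mathcal{T}_u(\De) - \varepsilon$. The composition $H := f \circ \Phi_u$ is then admissible: it is locally constant on open neighborhoods of its extrema, and from $X_H = f'(\Phi_u)\, X_{\Phi_u}$ one checks that any non-constant periodic orbit of $X_H$ on the level set $\{\Phi_u = c\}$ has period $1/(k f'(c)) \ge 1$, where $k \le m_u(\De)$ is the stabilizer order there. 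Thus $c_{\textrm{HZ}}(M,\omega) \ge H_{\max} - H_{\min} \ge \mathcal{T}_u(\De) - \varepsilon$; letting $\varepsilon \to 0$ and then taking the supremum over primitive $u$ yields $c_{\textrm{HZ}}(M,\omega) \ge w_T(\De)$.

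The main obstacle will be the combinatorial step identifying $m_u(\De)$ with the maximal order of finite stabilizers at non-fixed points. This requires two pieces: first, using the inclusion of isotropy subtori along the face lattice to reduce the supremum over all non-fixed orbits to a maximum over edges; and second, carrying out the lattice-index (equivalently, volume) calculation that identifies $|S^1_u \cap T_E|$ with $k^u_E = |\Z^n \cap \textrm{int}(P^u_E)| + 1$, taking care of the degenerate case $u \in V_{J_E}$ where the edge consists of fixed points and contributes $k^u_E = 1$.
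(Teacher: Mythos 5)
Your proposal follows essentially the same route as the paper: restrict the torus action to the sub-circle $S^1_u$, identify $m_u(\De)$ with the maximal order of a finite stabilizer at a non-fixed point via the reduction to edges and the lattice-point count $|S^1_u\cap T_E|=k^u_E$ (the paper's Lemma~\ref{lemma:stabilizer} and Corollary~\ref{cor:stabilizer}), reparametrize the moment map by a slowly increasing cutoff to obtain an admissible function (the paper's Lemma~\ref{lemma:modification}), and take the supremum over primitive $u$. Your $\varepsilon$-approximation in the cutoff step, achieving oscillation $\mathcal{T}_u(\De)-\varepsilon$ rather than $\mathcal{T}_u(\De)$ exactly, is a harmless (indeed slightly more careful) variant of the paper's construction and changes nothing after passing to the supremum.
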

\begin{proof}
    For any primitive vector $u \in \Z^n$, let $S^1_u \subset T^n$ be the circle whose Lie algebra is $\R u$. $S^1_u$ acts on $M^{2n}$ in a Hamiltonian fashion with the moment map $\mu= \langle \Phi,u \rangle$. By Corollary~\ref{cor:stabilizer},
    \[m_u(\De) = \max\{ \, |\textrm{stab}_p|:\, p \in M \setminus M^{S^1_u}\}\]
    
    Since $u$ is primitive, this action is effective. By Lemma~\ref{lemma:modification}, there exists an admissible function $H$ such that
    \[H_{\max} -H_{\min} = \frac{1}{m_u}(\mu_{\max} -\mu_{\min}) =\mathcal{T}_u.\]

    By definition, $c_{\textrm{HZ}}(M,\omega) \geq w_T(\De)$.
\end{proof}

Recall that two toric actions $\rho_1,\rho_2: T^n \times M^{2n} \to M^{2n}$ are {\bf equivalent} if there exist a symplectomorphism $\Psi: M \to M$ and an automorphism $h: T^n \to T^n$ such that the following diagram commutes:

\[ \begin{tikzcd} 
        T^n \times M^{2n} \arrow[r,"\rho_1"] \arrow[d,"{(h,\Psi)}"] 
        & M^{2n} \arrow[d,"f"]\\ T^n \times M^{2n} \arrow[r,"\rho_2"] & M^{2n} 
    \end{tikzcd}  \]

Karshon, Kessler and Pinsonnault~\cite{KKP07} proved that two toric actions are equivalent if and only if their Delzant polytopes differ by an affine transformation. Indeed, the toric width is invariant under the affine transformations, so equivalent toric actions will give the same bound in Theorem~\ref{thm:main}.

\begin{lemma}
    The toric width of a Delzant polytope is invariant under the action of the group of affine transformations $\Aff(n,\R)= \GL(n,\Z) \ltimes \R^n$.
\end{lemma}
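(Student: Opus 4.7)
The plan is to track how every ingredient in the definition of $w_T(\De)$ transforms under an affine map $\phi(x) = Ax + b$ with $A \in \GL(n,\Z)$ and $b \in \R^n$, and then exhibit a bijection on primitive integer vectors that preserves the value of $\mathcal{T}_u$ termwise.

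First I would compute how the defining data of $\De = \bigcap_i\{\langle x, v_i\rangle \leq \lambda_i\}$ transforms: since $\phi(\De) = \{x : \phi^{-1}(x) \in \De\}$, a direct substitution shows that the new outward normals are $v'_i := (A^{-1})^T v_i$ with shifted constants $\lambda'_i := \lambda_i + \langle b, v'_i \rangle$. Because $(A^{-1})^T \in \GL(n,\Z)$, it sends primitive integer vectors bijectively to primitive integer vectors, so each $v'_i$ is still primitive. The same substitution shows that an edge $E$ of $\De$ with index set $J_E$ is sent to the edge $\phi(E)$ of $\phi(\De)$ defined by the same index set $J_{\phi(E)} = J_E$, now using the new normals $\{v'_j\}$.

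Next, for a primitive $u \in \Z^n$ I would set $u' := (A^{-1})^T u$, which is again primitive, giving a bijection on the set of primitive integer vectors. A change of variables $x = \phi(y)$ yields
\[\max_{x \in \phi(\De)} \langle x, u' \rangle - \min_{x \in \phi(\De)} \langle x, u' \rangle = \max_{y \in \De} \langle y, A^T u' \rangle - \min_{y \in \De} \langle y, A^T u' \rangle = \max_{y \in \De} \langle y, u \rangle - \min_{y \in \De} \langle y, u \rangle,\]
so the translation part $b$ cancels and only the linear part acts. For the auxiliary polytope appearing in $k^u_E$, substituting $v'_j$ and $u'$ gives
\[P^{u'}_{\phi(E)} = (A^{-1})^T \bigl(P^u_E\bigr).\]
Since $(A^{-1})^T$ is a $\Z$-linear automorphism of $\R^n$, it preserves both $\Z^n$ and topological interiors, and therefore $k^{u'}_{\phi(E)} = k^u_E$. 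Combined with the edge bijection $E \mapsto \phi(E)$, this gives $m_{u'}(\phi(\De)) = m_u(\De)$, hence $\mathcal{T}_{u'}(\phi(\De)) = \mathcal{T}_u(\De)$, and taking the supremum through the bijection $u \mapsto u'$ yields $w_T(\phi(\De)) = w_T(\De)$.

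The argument is essentially bookkeeping; I do not anticipate a real obstacle. The only point that requires some care is setting up the dual correspondence correctly: points of the polytope transform covariantly by $\phi$, while the normal directions and the auxiliary vector $u$ (which pairs with points via $\langle \cdot, u\rangle$) must transform contragrediently by $(A^{-1})^T$. Once this duality is in place, the fact that $\GL(n,\Z)$ preserves $\Z^n$ makes every subsequent claim about primitivity and interior lattice point counts immediate.
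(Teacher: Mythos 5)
Your proof is correct and follows essentially the same route as the paper: both identify the transformed normals as $(A^{-1})^{T}v_i$, pair $u$ with its contragredient image to get a bijection on primitive vectors, show $P^{u}_{E}$ transforms by $(A^{-1})^{T}$ so interior lattice counts are preserved, and observe that the translation part cancels in $\max - \min$. The only cosmetic difference is that you handle the translation simultaneously with the linear part and write the vector bijection in the opposite direction ($u \mapsto (A^{-1})^{T}u$ rather than $u' = M^{T}u$), which is the same correspondence.
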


\begin{proof}
    For any matrix $M \in \GL(n,\Z)$, let $\De'= M\De$ be the image of $\De$ under $M$.  For any edge $E \subset \De$, let $E'\subset \De'$ be the corresponding edge. 
    
    Notice for any $y \in \Delta'$, $\langle M^{-1}y , v_i \rangle \leq \lambda_i$, i.e. $\langle y, (M^{-1})^{T}v_i \rangle \leq \lambda_i$. Hence, for any primitive vector $u \in \Z^n$ and $u'= M^T u$,
    \[P^u_{E'} =\{tu-\sum_{j\in J_E}t_j (M^{-1})^{T} v_j \big| \, 0\leq t,t_j \leq 1\} = (M^{-1})^{T}P^{u'}_E.\] 
    
    Since $M$ preserves the lattice $\Z^n$, 
    \[k_{E'}^{u}=  |\Z^n \cap \textrm{int}(P^u_{E'})|+1 =   | (M^{-1})^{T}(\Z^n \cap \textrm{int}(P^{u'}_E))|+1 = | \Z^n \cap \textrm{int}(P^{u'}_E)|+1  = k_E^{u'}.\]
    
    Therefore, $m_{u}(\De')  = m_{u'}(\De)$.
   
    Furthermore, since for any $y \in \De'$, there exists a unique $x \in \De$ such that $y= Mx$ and $\langle y, u \rangle  = \langle Mx, u \rangle =  \langle x, M^{T}u \rangle = \langle x, u'\rangle $. It follows that $\mathcal{T}_u(\De') = \mathcal{T}_{u'}(\De)$.
   
    Since $M$ maps the set of primitive vectors bijectively to itself, $w_T(\De) = w_T(\De')$.

    On the other hand, translations change $\smash{\max_{x\in \Delta}} \langle x, u \rangle$ and $\smash{\min_{x\in \Delta}} \langle x, u \rangle$ by the same amount while leaving the normal vectors $v_j$ and thus $k^u_E$ unchanged. Hence, translations do not change the toric width.
    
    Therefore, $w_T(\De) = w_T(M\De+v)$ for any $M \in \GL(n,\Z)$ and any $v \in \R^n$.
\end{proof}

However, a symplectic manifold could admit many inequivalent toric actions. For instance, Karshon, Kessler and Pinsonnault~\cite{KKP07} proved that each compact symplectic $4$-manifold admits finitely many inequivalent toric actions, so we immediately have the following corollary.
\begin{corollary}
    Let $(M,\omega)$ be a symplectic manifold that admits a toric action. Then
    \[c_{\textrm{HZ}}(M,\omega) \geq  \sup_{\De} \{w_T(\De) | \De \textrm{ is a Delzant polytope of a toric action.}\}. \] 
\end{corollary}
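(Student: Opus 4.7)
The plan is to deduce the corollary directly from Theorem~\ref{thm:main} by taking a supremum over the family of toric structures on $(M,\omega)$. By hypothesis this family is nonempty, so fix any Delzant polytope $\De$ arising as the moment polytope of some effective Hamiltonian $T^n$-action on $(M,\omega)$. Theorem~\ref{thm:main} applied to the symplectic toric manifold $(M,\omega,T^n,\Phi)$ gives the single-polytope inequality
\[ c_{\textrm{HZ}}(M,\omega) \geq w_T(\De). \]
Crucially, the left-hand side depends only on the underlying symplectic manifold $(M,\omega)$ and not on the chosen toric action, so this inequality holds for every admissible $\De$ simultaneously.

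Next I would take the supremum of the right-hand side over all Delzant polytopes $\De$ arising from a toric action on $(M,\omega)$. Since $c_{\textrm{HZ}}(M,\omega)$ is a fixed real number (or $+\infty$) that upper-bounds every $w_T(\De)$ in this family, it also upper-bounds their supremum, yielding
\[ c_{\textrm{HZ}}(M,\omega) \geq \sup_{\De} \{ w_T(\De) \mid \De \textrm{ is a Delzant polytope of a toric action}\}. \]

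There is no real obstacle here: the content of the corollary is entirely carried by Theorem~\ref{thm:main}, and the only additional observation is that distinct toric actions on the same symplectic manifold all produce lower bounds for the same Hofer--Zehnder capacity, so one may optimize over them. I would not need to invoke the Karshon--Kessler--Pinsonnault finiteness result in the proof itself; that theorem is cited only as motivation, to indicate that in dimension four the supremum is in fact a maximum over a finite set, which makes the bound computable in principle. In higher dimensions the supremum is over a possibly infinite family, but the inequality is valid without any change in the argument.
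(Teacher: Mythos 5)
Your proof is correct and matches the paper's (implicit) argument exactly: the paper presents this corollary as an immediate consequence of Theorem~\ref{thm:main}, obtained by applying the single-polytope inequality to each toric action and taking the supremum over the resulting Delzant polytopes. Your observation that the Karshon--Kessler--Pinsonnault finiteness result is motivational rather than logically necessary is also accurate.
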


In Section~\ref{s2}, we review the basic notions of symplectic toric manifolds and study the sub-circle actions. In Section~\ref{s3}, we prove the key lemma that the moment map of a Hamiltonian circle action can be modified to an admissible function. In Section~\ref{s4}, we give an explicit formula for the lower bound of the Hofer-Zehnder capacity of $4$-dimensional symplectic toric manifolds and use the formula to find lower bounds of some familiar manifolds. Furthermore, we compare the lower bound with the result of \cite{HS17} to show our estimate is sharp in many cases.

\subsubsection*{Convention}
Throughout this paper, if not otherwise specified, we will always assume the symplectic manifold $(M,\omega)$ in discussion is compact and connected. For the Hofer-Zehnder capacity of non-compact symplectic manifold, one needs to further require the admissible functions to be compactly supported.

\subsubsection*{Acknowledgements}
This paper is the result of an REU project at the University of Michigan, Ann Arbor, during the summer semester, 2019. I would like to thank Professor Alejandro Uribe for teaching me symplectic geometry, for introducing this problem to me, and for his invaluable help with many aspects of this project.  I am very grateful to the Department of Mathematics at the University of Michigan for supporting this project. I also want to thank Jun Li for encouraging me to compile my results into this paper and Shengzhen Ning for pointing out a mistake in an earlier version and suggesting a way to fix it.

\section{Sub-circle Actions on Symplectic Toric Manifolds}\label{s2}
In this section, we recall some facts about symplectic toric manifolds and study sub-cirlce actions. Throughout this section, we will use the convention that $T = \R^n/ \Z^n$ and its Lie algebra is $\mathfrak{t} \cong \R^n$. Furthermore, we fix an inner product on $\mathfrak{t}$ that identifies $\mathfrak{t}$ with $\mathfrak{t}^*$. 

\subsection{Symplectic Toric Manifolds}

\begin{definition}\label{def:Hamiltonian}
    Suppose $T$ acts on $(M, \omega)$ via symplectomorphisms. This action is \textbf{Hamiltonian} if there exists a $T$-invariant map $\Phi: M \rightarrow \mathfrak{t}^* \cong \R^n$ such that for each $X \in \mathfrak{t}$, 
    \[d\Phi^X = \omega(X^\#, \cdot)\] 
    where $\Phi^X(p): = \langle \Phi(p), X \rangle$, and $X^\#$ is the fundamental vector field corresponding to $X$.

The tuple $(M, \omega, T, \Phi)$ is called a \textbf{Hamiltonian $T$-space} and $\Phi$ is called a \textbf{moment map}.
\end{definition}

For any compact connected Hamiltonian $T$-space, its moment image $\Phi(M)$ is a convex polytope \cite{Atiyah1982,GS82}, called the \textbf{moment polytope}.

Since the orbits of the $T$-action are isotropic submanifolds, if $T$ acts effectively, i.e. every non-identity element acts non-trivially, then $\dim (T) \leq \frac{1}{2} \dim (M)$. For a compact, connected Hamiltonian $T$-space $(M, \omega, T, \Phi)$, if $T$ acts effectively and $\dim (T) = \frac{1}{2} \dim (M)$, we call the tuple $(M, \omega, T, \Phi)$ a \textbf{symplectic toric manifold}. The moment polytope of a symplectic toric manifold satisfies the following definition.

\begin{definition}\label{def:Delzant}
    A \textbf{Delzant polytope} $\Delta \subset \R^n$ is a polytope satisfying:
\begin{itemize}
    \item \textbf{simplicity}: there are n edges meeting at each vertex;
    \item \textbf{rationality}: the edges meeting at the vertex p are rational in the sense that each edge is of the form $p+tu_i, t \geq 0$, with $u_i \in \mathbb{Z}^n$;
    \item \textbf{smoothness}: for each vertex, the corresponding $u_1,...,u_n$ can be chosen to be a $\mathbb{Z}$-basis of $\mathbb{Z}^n$.
\end{itemize}
\end{definition}

Delzant~\cite{De88} classified symplectic toric manifolds by their moment images.
\begin{theorem}[Delzant]
    There is a one-to-one correspondence bewteen the following two sets.
    \[\begin{tikzcd}
        \{\text{Symplectic Toric Manifolds}\}/{\sim} \arrow[r,leftrightarrow,"{1-1}"] & \{\text{Delzant Polytopes}\}/{\sim}
    \end{tikzcd}\]
    
The equivalence relation is given by equivariant symplectormorhism on the left and by translation on the right.
\end{theorem}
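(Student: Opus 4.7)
The plan is to establish the correspondence in two directions: first that every symplectic toric manifold gives rise to a Delzant polytope (well-defined up to translation of the moment map), and then that every Delzant polytope arises from some symplectic toric manifold that is unique up to equivariant symplectomorphism.

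For the forward direction, I would start from the Atiyah--Guillemin--Sternberg convexity theorem, which guarantees that the moment image $\Phi(M)$ is a convex polytope. To verify it satisfies the three Delzant conditions, I would use the equivariant symplectic local normal form near each fixed point $p \in M^{T^n}$: there exists a $T^n$-equivariant symplectomorphism from a neighborhood of $p$ onto a neighborhood of $0 \in \C^n$ carrying the action to a linearized torus action, and the weights of the isotropy representation at $p$ are (up to sign) the primitive edge vectors of $\Phi(M)$ at the vertex $\Phi(p)$. Effectiveness of the $T^n$-action forces these weights to form a $\Z$-basis of $\Z^n$, which yields smoothness; simplicity and rationality follow from the same local picture. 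Replacing $\Phi$ by $\Phi + c$ translates the polytope, so the assignment descends to polytopes modulo translation.

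For surjectivity, I would carry out the Delzant construction via symplectic reduction. Given $\De = \bigcap_{i=1}^d \{\langle x, v_i \rangle \leq \lambda_i\}$, the assignment $e_i \mapsto v_i$ gives a surjective map $\pi \colon \R^d \to \R^n$ which, by smoothness of $\De$, sends $\Z^d$ onto $\Z^n$. Its kernel is the Lie algebra of a subtorus $K \subset T^d$ of dimension $d-n$. The standard $T^d$-action on $\C^d$ has moment map $z \mapsto (\pi |z_1|^2, \ldots, \pi |z_d|^2) - (\lambda_1, \ldots, \lambda_d)$; pulling back along $K \hookrightarrow T^d$ produces a $K$-moment map $\mu_K$. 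I would then verify that $0$ is a regular value of $\mu_K$, that $K$ acts freely on $\mu_K^{-1}(0)$ (both consequences of the Delzant conditions on $\De$), and that the reduction $M_\De := \mu_K^{-1}(0)/K$ is a symplectic toric manifold under the residual $T^n = T^d/K$ action whose moment polytope is exactly $\De$.

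The main difficulty lies in injectivity: showing that two symplectic toric manifolds sharing a moment polytope (up to translation) are equivariantly symplectomorphic. I would prove this by arguing that every such $(M, \om, T^n, \Phi)$ is equivariantly symplectomorphic to the canonical model $M_\De$ built above, reducing uniqueness to an intrinsic feature of the Delzant construction. The argument proceeds stratum by stratum: near the preimage of each face of $\De$ the isotropy subgroup is pinned down combinatorially by the normal vectors $v_j$ of the incident facets, so the equivariant symplectic local normal form supplies a local equivariant symplectomorphism to the corresponding open subset of $M_\De$; an equivariant Moser argument then patches these local identifications. The hard part is controlling the transition between strata and globalizing the identification over $\Phi^{-1}(\mathrm{int}(\De))$, which is a principal $T^n$-bundle that must be compared with the free quotient appearing in the reduction, and keeping track of how the circle stabilizers degenerate as one approaches the boundary of $\De$.
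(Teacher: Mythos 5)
The paper does not prove this theorem: it is quoted as Delzant's classification and cited to \cite{De88}, so there is no in-paper argument to compare against. Your outline is the standard proof (convexity theorem plus equivariant local normal form for the forward direction; the Delzant construction by symplectic reduction of $\C^d$ by the subtorus $K=\ker(T^d\to T^n)$ for surjectivity; local normal forms plus a gluing argument for uniqueness), and each step you describe is correct in substance --- the smoothness condition is exactly what makes $\pi(\Z^d)=\Z^n$, hence $K$ connected, and what guarantees that $K$ acts freely on $\mu_K^{-1}(0)$. The one place where your sketch papers over the real content is the injectivity step: ``an equivariant Moser argument then patches these local identifications'' is not by itself enough, because the local equivariant symplectomorphisms to the model $M_\De$ need not agree on overlaps. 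Delzant's actual argument corrects them using the vanishing of the first \v{C}ech cohomology of the (contractible) polytope with coefficients in the sheaf of smooth functions, which is what lets one adjust the local identifications by Hamiltonian flows so that they glue; without some such cohomological input the globalization over $\Phi^{-1}(\mathrm{int}\,\De)$ and across the degenerating strata does not follow. Since you explicitly flag this as the hard part rather than claiming it is done, I would call your proposal an accurate roadmap of the classical proof with that single step left genuinely open.
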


\begin{example}
Consider the symplectic manifold $(S^2, c_1(TS^2))$. In cylindrical coordinate, the symplectic form is $\frac{1}{2\pi} d\theta \wedge dh$. Let $S^1$ act on it by rotation: 
$$t.(\theta, h) = (\theta + 2\pi t, h).$$
The moment map is $h:S^2 \rightarrow \mathbb{R}$ and the moment polytope is $[-1,1]$.
\begin{figure}
    \centering
    \begin{tikzpicture}
  \draw (0,0) circle (2cm);
  \draw (-2,0) arc (180:360:2 and 0.6);
  \draw[dashed] (2,0) arc (0:180:2 and 0.6);
  \fill[fill=black] (0,2) circle (2pt);
  \fill[fill=black] (0,-2) circle (2pt);
  \draw[->] (4,0) -- (8,0);
  \draw (6,0) node[anchor = south] {$\Phi = h$};
  \draw (10,2) node[anchor = west] {1} -- (10,-2) node[anchor = west]{-1};
  \draw[dashed] (10,3) -- (10,2);
  \draw[dashed] (10,-3) -- (10,-2);
  \fill[fill=black] (10,2) circle (2pt);
  \fill[fill=black] (10,-2) circle (2pt);
\end{tikzpicture}
\end{figure}
\end{example}
\subsection{Sub-circle Actions}

Let $(M,\omega,T,\Phi)$ be a symplectic toric manifold with the Delzant polytope
\[\Delta = \bigcap_{i=1}^d \{ x \in \mathbb{R}^n \big| \, \langle x, v_i \rangle \leq \lambda_i \}.\]

We will give a formula for computing the stabilizer group under a sub-circle action of a non-fixed point whose moment image is on an edge of $\De$. We need the following lemma.

\begin{lemma}\label{lemma:stabilizer}
     Let $u, v_1, v_2,...,v_{n-1} \in \mathbb{Z}^n$ be linearly independent primitive vectors. Let $T' \subset T$ be the $(n-1)$ dimensional subtorus whose Lie algebra is generated by $v_1,v_2,...,v_{n-1}$ and $S^1_u \subset T$ be the circle whose Lie algebra is $\R u$. Let $P = \{tu - \sum_{i=1}^{n-1} t_i v_i \big| \, 0 \leq t, t_i \leq 1\}$. Then 
     \[|S^1_u \cap T'| = |\mathbb{Z}^n \cap \textrm{int}(P)| + 1 .\]
\end{lemma}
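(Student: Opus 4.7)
The plan is to build an explicit bijection
\[\phi : \Z^n \cap \textrm{int}(P) \longrightarrow (S^1_u \cap T') \setminus \{e\},\]
so that the cardinalities on both sides differ by one (for the identity), yielding the claimed equality. Since $u, v_1, \ldots, v_{n-1}$ are linearly independent over $\R$, every $m \in \Z^n \cap \textrm{int}(P)$ admits a unique expression $m = tu - \sum_{i=1}^{n-1} t_i v_i$ with $(t, t_1, \ldots, t_{n-1}) \in (0,1)^n$. I would define $\phi(m) := [tu] \in T$.

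First I would verify well-definedness. By construction $[tu] \in S^1_u$, and rearranging gives $[tu] = [m + \sum t_i v_i] = [\sum t_i v_i] \in T'$ since $m \in \Z^n$. Primitivity of $u$ forces $\R u \cap \Z^n = \Z u$, so $t \in (0,1)$ implies $\phi(m) \neq e$.

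For injectivity, an equality $\phi(m) = \phi(m')$ yields $(t-t')u \in \Z^n$; primitivity of $u$ together with $t, t' \in (0,1)$ forces $t = t'$. The difference $m - m' = \sum(t'_i - t_i) v_i$ then lies in $W \cap \Z^n$, where $W := \textrm{span}_\R(v_1, \ldots, v_{n-1})$, and expressing it as an integer combination of the $v_i$ (which requires $\{v_j\}$ to generate $W \cap \Z^n$ as a $\Z$-module, a property guaranteed by the Delzant smoothness condition at an edge in the setting where this lemma is applied) forces each $t'_i - t_i \in \Z \cap (-1,1) = \{0\}$, so $m = m'$.

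For surjectivity, given a non-identity $[tu] \in S^1_u \cap T'$ with representative $t \in (0,1)$, the condition $[tu] \in T'$ rewrites as $tu \in W + \Z^n$, so one can write $tu - \sum t_i v_i = m \in \Z^n$; translating each $t_i$ by an integer (absorbed into $m$) reduces to $t_i \in [0,1)$, and one must argue further that $t_i \in (0,1)$ so that $m$ genuinely lies in $\textrm{int}(P)$. The main obstacle is precisely this strict positivity: if the canonical $[0,1)$-reduction leaves some $t_i = 0$, the lattice point $m$ sits on a boundary face of $P$ and is not counted by $|\Z^n \cap \textrm{int}(P)|$. I expect the cleanest workaround to be an index-theoretic detour --- computing $|S^1_u \cap T'|$ as $[\Z^n : \Z u + (W \cap \Z^n)]$, recognizing this index as $|\det(u, v_1, \ldots, v_{n-1})|$ under the Delzant hypothesis that the $v_j$ generate $W \cap \Z^n$, and then relating this determinant to the count of lattice points in the half-open parallelepiped spanned by $u, -v_1, \ldots, -v_{n-1}$ before extracting $|\Z^n \cap \textrm{int}(P)| + 1$ through a careful accounting of which boundary faces of $P$ do and do not contribute. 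The primitivity of $u$ and of each $v_i$ enters crucially in this bookkeeping, to prevent double-counting on the "lower" faces where $t_i = 0$.
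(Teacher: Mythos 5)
Your map $\phi$ is the inverse of the one the paper itself constructs (the paper sends a nontrivial element $[tu]=[\sum t_jv_j]$ of $S^1_u\cap T'$ to the lattice point $tu-\sum t_jv_j$), so in substance this is the same approach run in the opposite direction. Your well-definedness and injectivity arguments are fine, and you are right that injectivity needs $v_1,\dots,v_{n-1}$ to be a $\Z$-basis of $\textrm{span}_{\R}(v_1,\dots,v_{n-1})\cap\Z^n$ --- a hypothesis absent from the lemma as stated but available in the application to edges of Delzant polytopes. The serious issue is exactly the one you isolate in the surjectivity step, and you should trust your instinct there: the strict positivity of the $t_i$ genuinely fails, and in fact the lemma is false for $n\geq 3$. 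Take $n=3$, $u=(0,1,2)$, $v_1=(1,0,0)$, $v_2=(0,1,0)$; these are primitive and linearly independent, and $v_1,v_2$ even extend to a $\Z$-basis of $\Z^3$, so this configuration occurs at an actual edge of a Delzant polytope (e.g.\ the cube $[0,1]^3$). Then $T'=\{[s_1,s_2,0]\}$ and $[tu]=[0,t,2t]\in T'$ iff $2t\in\Z$, so $|S^1_u\cap T'|=2$; but every point of $\textrm{int}(P)$ has first coordinate $-t_1$ with $t_1\in(0,1)$, so $\Z^3\cap\textrm{int}(P)=\emptyset$ and the right-hand side equals $1$. The nontrivial element $[0,1/2,0]$ corresponds to the lattice point $(0,0,1)=\frac{1}{2}u-0\cdot v_1-\frac{1}{2}v_2$, which lies on the face $t_1=0$ of $P$ rather than in its interior --- precisely the boundary contribution you were worried about.

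The paper's own proof contains the identical gap: it asserts without justification a bijection between $(S^1_u\cap T')\setminus\{e\}$ and solutions $(t,t_1,\dots,t_{n-1})$ lying in the \emph{open} cube $(0,1)^n$, which the example above refutes. Your proposed repair via the index $[\Z^n:\Z u+(W\cap\Z^n)]$ is the right computation, but carrying it out yields a different formula: under the $\Z$-basis hypothesis one gets $|S^1_u\cap T'|=|\det(u,-v_1,\dots,-v_{n-1})|$, which is the number of lattice points in the half-open parallelepiped $\{tu-\sum_i t_iv_i \mid t,t_i\in[0,1)\}$, not $|\Z^n\cap\textrm{int}(P)|+1$. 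The two counts coincide when $n=2$ by Pick's theorem (this is the content of Corollary~\ref{cor:Det}, which is why the paper's two-dimensional examples are unaffected), but they differ in higher dimensions, where the closed parallelepiped can carry non-vertex boundary lattice points even though $u$ and all the $v_i$ are primitive. So the correct conclusion of your analysis is not that surjectivity needs a more careful argument, but that the statement itself must be reformulated (in terms of the half-open parallelepiped, or the determinant) before any proof can succeed; as written, only the inequality $|\Z^n\cap\textrm{int}(P)|+1\leq|S^1_u\cap T'|$ that your injectivity argument provides is true.
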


\begin{proof}
    Write $u = (u^1,...,u^n), v_i= (v_i^1,...,v_i^n)$. Then \[S^1_u= \{[tu^1,...,tu^n] \in \R^n/\Z^n \big|  \, t \in [0,1) \}, \] and \[T' = \big \{ [\sum_{j=1}^{n-1} t_j v_j^1,...,\sum_{j=1}^{n-1} t_j v_j^n] \in \R^n/\Z^n \big|  \, t_j \in [0,1) \big \}. \] 
    
    Thus, there is a one-to-one correspondence between $(S^1_u \cap T') \setminus \{[0,...,0]\}$ and 
    \[Q:= \{(t,t_1,...,t_{n-1}) \in (0,1)^n \big| \, tu^k \equiv \sum_{j=1}^{n-1} t_j v_j^k \pmod{\Z} \textrm{ for all } k = 1,...,n\}. \]

    Now, we show that the matrix $A = (u,-v_1,...,-v_{n-1})$ maps $Q$ bijectively to $\mathbb{Z}^n \cap \textrm{int}(P)$.
    
    For any $(t,t_1,...,t_{n-1}) \in Q$, there exist unique $k_1,...,k_n \in \Z$ such that
    \begin{equation}\label{equation:stabilizer}
        \begin{cases}
    tu^1 - \sum_{j=1}^{n-1} t_j v_j^1= k_1\\
    tu^2 - \sum_{j=1}^{n-1} t_j v_j^2 = k_2\\
         \cdots \cdots \cdots \cdots \cdots \cdots \cdots\\
    tu^n - \sum_{j=1}^{n-1} t_j v_j^n = k_n
    \end{cases}
    \end{equation}
    
    Thus, $(k_1,...,k_n)^T = A(t,t_1,...,t_{n-1})^T \in \mathbb{Z}^n \cap \textrm{int}(P)$. 

    Since $u ,v_1,...,v_{n-1}$ are linearly independent, $A$ is invertible. For any $(k_1,...,k_n)^T \in \mathbb{Z}^n \cap \textrm{int}(P)$, there exist unique $(t,t_1,...,t_{n-1})^T = A^{-1}(k_1,...,k_n)^T \in (0,1)^n$. Now, equation~(\ref{equation:stabilizer}) implies that $(t,t_1,...,t_{n-1}) \in Q$, i.e. $\mathbb{Z}^n \cap \textrm{int}(P) = A Q$.

    Since $A \in \GL(n,\Z)$, we conclude that $|S^1_u \cap T'| = |Q|+1 = |\mathbb{Z}^n \cap \textrm{int}(P)| + 1 $.
\end{proof}

Now we apply this lemma to the sub-circle action.

Recall that for an edge $E$ of $\De$, there exists a unique $J_E \subset \{1,2,....,d\}$ with $|J_E| = n-1$ such that
\[E = \Delta \cap \bigcap_{j \in J_E}  \{ x \in \mathbb{R}^n \big| \, \langle x, v_j \rangle = \lambda_j \}.  \]

\begin{corollary}\label{cor:stabilizer}
Let $u \in \Z^n$ be a primitive vector and $S^1_u \subset T$ be the circle whose Lie algebra is $\R u$. Let $P^u_E =\{tu-\sum_{j \in J_E} t_jv_j \big| \, 0\leq t,t_j \leq 1\}$, $k^u_E = |\Z^n \cap \textrm{int}(P^u_E)|+1$, and $m_u(\De) = \max_E \{k^u_E \big| \, E \textrm{ is an edge of } \De\}$. Then
\[m_u(\De) = \max_p \{ \, |\textrm{stab}_p|:\, p \in M \setminus M^{S^1_u}\}.\]
\end{corollary}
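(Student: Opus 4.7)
The plan is to identify the $T$-stabilizer of each point of $M$ with the subtorus determined by the face of $\De$ containing its moment image, and then to reduce the maximization over $p \in M \setminus M^{S^1_u}$ to a maximization over edges of $\De$, where Lemma~\ref{lemma:stabilizer} applies directly. Concretely, for $p \in M$ with $\Phi(p)$ in the relative interior of a face $F$ of $\De$, the standard description of stabilizers on a symplectic toric manifold gives $\textrm{stab}_T(p) = T_F$, where $T_F \subset T$ is the subtorus with Lie algebra $\textrm{span}_{\R}\{v_j : j \in J_F\}$. Therefore $\textrm{stab}_{S^1_u}(p) = S^1_u \cap T_F$, and $p \in M^{S^1_u}$ precisely when $u \in \textrm{span}_{\R}\{v_j : j \in J_F\}$; otherwise this intersection is finite.

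Next I would reduce to edges. For any face $F$ of $\De$ with $\dim F \geq 1$ and any edge $E \subset \overline{F}$, one has $J_F \subset J_E$ and hence $T_F \subset T_E$, so $|S^1_u \cap T_F| \leq |S^1_u \cap T_E|$. Vertices are $T$-fixed points and hence lie in $M^{S^1_u}$, so they are excluded from consideration, and every other face of $\De$ contains some edge in its closure. Consequently the supremum of $|\textrm{stab}_{S^1_u}(p)|$ over $p \in M \setminus M^{S^1_u}$ is attained at the preimage of some edge.

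It then remains to evaluate $|S^1_u \cap T_E|$ for each edge $E$. When $u$ and $\{v_j : j \in J_E\}$ are linearly independent, Lemma~\ref{lemma:stabilizer} applies verbatim and yields $|S^1_u \cap T_E| = k^u_E$. When $u \in \textrm{span}_{\R}\{v_j : j \in J_E\}$, the preimage of the interior of $E$ lies in $M^{S^1_u}$, so such edges do not contribute to the stabilizer-side maximum; on the combinatorial side, $P^u_E$ is contained in the proper linear subspace $\textrm{span}_{\R}\{v_j : j \in J_E\}$, so it has empty interior in $\R^n$ and $k^u_E = 1$. Since $u$ is primitive, $S^1_u$ acts effectively, so $M \setminus M^{S^1_u}$ is nonempty and the stabilizer-side maximum is at least $1$; hence the degenerate edges do not inflate the value of $m_u(\De)$ beyond what is actually realized as a stabilizer size.

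The main obstacle, as I see it, is the bookkeeping in the last step rather than any hard calculation: one must verify that edges with $u$ in the span of $\{v_j : j \in J_E\}$ contribute only $k^u_E = 1$ to $m_u(\De)$, so that extending the maximum over all edges in the definition of $m_u(\De)$ agrees with the maximum over just the non-degenerate edges, which is what the stabilizer-side quantity sees. Once this is in place, Lemma~\ref{lemma:stabilizer} together with the face-stabilizer correspondence assembles into the corollary immediately.
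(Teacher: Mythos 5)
Your proposal is correct and follows essentially the same route as the paper: reduce the stabilizer computation to points lying over edges via the face--subtorus correspondence ($J_F \subset J_E$ gives $T_F \subset T_E$), apply Lemma~\ref{lemma:stabilizer} to identify $|S^1_u \cap T_E|$ with $k^u_E$, and check separately that edges with $u \in \textrm{span}_{\R}\{v_j : j \in J_E\}$ contribute only $k^u_E = 1$. Your additional remarks --- that effectiveness makes $M \setminus M^{S^1_u}$ nonempty so the degenerate value $1$ is always realized, and the explicit face-to-edge monotonicity --- are slightly more careful than the paper's appeal to Delzant's construction, but the argument is the same in substance.
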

\begin{proof}
    By Delzant's construction\cite{De88}, for any $x \in M \setminus M^T$, there exists an edge $E \subset \De$ and a $p \in M\setminus M^T$ such that $\Phi(p) \in E$ and $\textrm{stab}_x \subset \textrm{stab}_p$. Furthermore, $\textrm{stab}_p$ for the $T$-action is an $(n-1)$ dimensional subtorus $T_E$ whose Lie algebra is generated by $\{v_j:j \in J_E\}$. Therefore, for the $S^1_u$-action,
    \[\max_p\{ \, |\textrm{stab}_p|:\, p \in M \setminus M^{S^1_u}\} = \max_p\{ \, |\textrm{stab}_p|:\, p \in M \setminus M^{S^1_u}, \Phi(p) \in E\}.\]
    
    Let $p \in M\setminus M^T$ such that $\Phi(p) \in E$ for some edge $E\subset \De$. The stabilizer group of $p$ under the $S^1_u$-action is $S^1_u \cap T_E$. If $u \in \textrm{Span}\{v_j: j \in J_E\}$, then $S^1_u \subset T_E$, so $p \in M^{S^1_u}$. In this case, $ \textrm{int}(P^u_E) = \emptyset$, so $k^u_E=1$. If $u \not \in \textrm{Span}\{v_j: j \in J_E\}$, by Lemma~\ref{lemma:stabilizer}, $\textrm{stab}_p = S^1_u \cap T_E$ has cardinality $k^u_E$. Since $k^u_E \geq 1$,
    \[\max_p\{ \, |\textrm{stab}_p|:\, p \in M \setminus M^{S^1_u}, \Phi(p) \in E\} = \max_E\{k^u_E \big| \, E \textrm{ is an edge of } \De\} = m_u(\De).\]
\end{proof}

\section{The Hofer-Zehnder Capacity}\label{s3}
In this section, we review the definition of the Hofer-Zehnder capacity and prove the key lemma used in the proof of the main theorem. Recall that $H: M \to \R$ is \textbf{admissible} if 
\begin{itemize}
    \item there exist open sets $U,V$ such that $H|_U = H_{\max}$ and $H|_V = H_{\min}$, and
    \item $X_H$ has no non-constant periodic orbits of period less than $1$.
\end{itemize}
We denote the set of admissible functions by $\mathcal{H}_{ad}(M,\omega) = \{ H \in C^\infty(M)$ \big| $H$ is admissible$\}$. 

\begin{definition}\label{def:HZ}
    The \textbf{Hofer-Zehnder capacity} of $(M,\omega)$ is defined by \[c_{\textrm{HZ}}(M,\omega) = \textrm{sup} \{H_{\max} -H_{\min} \big| \, H \in \mathcal{H}_{ad}(M, \omega) \}. \]
\end{definition}

An equivalent description of the Hofer-Zehnder capacity is given below.

For a smooth function $H: M \to \R$, let  \[\mathcal{T}_H:= \textrm{inf} \{\mathcal{T} \big| \, \mathcal{T} \textrm{ is the period of a non-constant periodic orbit of } X_H\}.\]

\begin{proposition}\label{prop:equiv-def}
    Let $\tilde{\mathcal{H}}(M) := \{ H \in C^{\infty}(M)$ \big| $H(M) = [0,1]$ , $H$ attains $0,1$ on open sets$\}$. Then 
 \[c_{\textrm{HZ}}(M,\omega) = \textrm{sup} \{ \mathcal{T}_{H} \big| \, H \in \tilde{\mathcal{H}}(M)\}.\]
\end{proposition}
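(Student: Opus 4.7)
The plan is to exploit the elementary scaling behavior of Hamiltonian vector fields: if $G = cH + d$ for constants $c \neq 0, d \in \R$, then $X_G = c\,X_H$, so a curve $\gamma$ is a periodic orbit of $X_G$ of period $T$ if and only if $s \mapsto \gamma(s/c)$ is a periodic orbit of $X_H$ of period $cT$. Therefore $\mathcal{T}_{cH+d} = \mathcal{T}_H/c$ for $c > 0$. Adding constants and scaling by positive constants also preserves membership in the class of functions that attain their max and min on open sets. This is the only ingredient needed.

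For the inequality $c_{\textrm{HZ}}(M,\omega) \leq \sup\{\mathcal{T}_H : H \in \tilde{\mathcal{H}}(M)\}$, I would take any admissible $H$ with $H_{\max} > H_{\min}$ (the case $H$ constant contributes $0$ and is trivial) and normalize to
\[G := \frac{H - H_{\min}}{H_{\max}-H_{\min}} \in \tilde{\mathcal{H}}(M).\]
By the scaling identity, $\mathcal{T}_G = (H_{\max}-H_{\min})\,\mathcal{T}_H$. Since $H$ is admissible, $\mathcal{T}_H \geq 1$, and thus $\mathcal{T}_G \geq H_{\max}-H_{\min}$. Taking sup over admissible $H$ yields the desired inequality.

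For the reverse inequality, given any $G \in \tilde{\mathcal{H}}(M)$ and any $\alpha$ with $0 < \alpha < \mathcal{T}_G$, set $H := \alpha G$. Then $H_{\max}-H_{\min} = \alpha$, $H$ attains its extrema on the open sets where $G$ does, and $\mathcal{T}_H = \mathcal{T}_G/\alpha > 1$, so $H$ is admissible. Hence $c_{\textrm{HZ}}(M,\omega) \geq \alpha$; letting $\alpha \nearrow \mathcal{T}_G$ and then taking the sup over $G \in \tilde{\mathcal{H}}(M)$ gives the other direction.

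There is essentially no substantive obstacle here; the only minor care required is that $\mathcal{T}_G$ is an infimum that may or may not be attained, which is why I approach it via $\alpha < \mathcal{T}_G$ rather than $\alpha = \mathcal{T}_G$, together with the trivial cases where $H$ is constant or where there are no non-constant periodic orbits at all (in the latter case $\mathcal{T}_H = +\infty$ by convention, which is consistent with both sides of the identity).
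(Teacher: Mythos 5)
Your proposal is correct and follows essentially the same route as the paper: both directions rest on the scaling identity $\mathcal{T}_{cH+d}=\mathcal{T}_H/c$ (the paper's Lemma~\ref{lem:period}), with the same normalization $H\mapsto (H-H_{\min})/(H_{\max}-H_{\min})$ in one direction and rescaling by (approximately) $\mathcal{T}_G$ in the other. The only cosmetic difference is that you approach the reverse inequality via $\alpha<\mathcal{T}_G$ and a limit, while the paper sets $H=\mathcal{T}_G\,G$ directly, which already works because admissibility only requires periods to be at least $1$.
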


We first establish a lemma that will be used in the proof of both Proposition~\ref{prop:equiv-def} and a later result.

\begin{lemma}\label{lem:period}
    Let $(M,\omega)$ be a symplectic manifold and $f: M \to \R$ be a non-constant smooth function. Then for any $a \in \R>0$, let $g = af$, then $\mathcal{T}_{g} = \frac{\mathcal{T}_{f}}{a}$.
\end{lemma}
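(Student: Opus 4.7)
The plan is to show that the Hamiltonian vector field scales linearly with its generating function, and then observe that the resulting flows differ only by a time reparametrization, from which the period scaling is immediate.

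First I would compute $X_g$ directly. Since $g = af$, we have $dg = a\,df = a\,\omega(X_f,\cdot) = \omega(aX_f,\cdot)$. By the defining equation $dg = \omega(X_g,\cdot)$ and the non-degeneracy of $\omega$, this forces $X_g = aX_f$.

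Next I would set up the correspondence between orbits. Let $\varphi^f_t$ and $\varphi^g_t$ denote the flows of $X_f$ and $X_g$ respectively. The relation $X_g = aX_f$ gives $\varphi^g_t = \varphi^f_{at}$: indeed, both sides agree at $t=0$ and have the same time derivative $aX_f$ at every point. Consequently, if $\gamma: \R \to M$ is a non-constant periodic orbit of $X_f$ of minimal period $T > 0$, then $\tilde{\gamma}(t) := \gamma(at)$ is a non-constant periodic orbit of $X_g$ of minimal period $T/a$, and this construction is invertible (using $a > 0$, so $1/a > 0$ and non-constancy is preserved). Thus the map $T \mapsto T/a$ is a bijection between the period sets of non-constant periodic orbits of $X_f$ and of $X_g$.

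Taking the infimum over each side, $\mathcal{T}_g = \inf\{T/a : T \text{ is a period of } X_f\} = \mathcal{T}_f/a$. I do not anticipate a real obstacle here; the only points requiring minor care are that $a > 0$ ensures the reparametrization is orientation-preserving and non-constancy is preserved on both sides, and that the flows are genuinely defined (which follows on a compact $M$, or locally in general, but the argument is purely about orbits so completeness is not needed).
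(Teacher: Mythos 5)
Your proposal is correct and follows essentially the same route as the paper's proof: compute $X_g = aX_f$ from $dg = a\,df$, observe the flows satisfy $\varphi^g_t = \varphi^f_{at}$, and deduce the bijection between non-constant periodic orbits with periods scaled by $1/a$ before taking infima. No gaps.
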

\begin{proof}
    Since $dg =adf$, $X_{g} = aX_f$. Let $\gamma(t)$ be an integral curve of $X_f$ starting from a point $p \in M$. Let $\eta(t) := \gamma(at)$, then \[\eta'(t) = a\gamma'(at) = aX_f(\gamma(at)) = aX_f(\eta(t)) = X_{g}(\eta(t)).\]
    The flow $\phi^t_f$ for $X_f$ and the flow $\phi^t_g$ for $X_g$ are related by $\phi^t_g = \phi^{at}_f.$
    Therefore, there is a one-to-one correspondence between the non-constant periodic orbits of $X_f$ and the non-constant periodic orbits of $X_g$. In addition, for any non-constant periodic orbits of $X_f$ with period $\mathcal{T}$, the corresponding periodic orbits of $X_g$ has period $\frac{\mathcal{T}}{a}$. The lemma follows.
\end{proof}

\begin{proof}[Proof of Proposition~\ref{prop:equiv-def}]
    For any non-constant function $ H \in \mathcal{H}_{ad}(M, \omega)$, let $||H||:= H_{\max} -H_{\min}$, then \[\tilde{H}: = \frac{H-H_{\min}}{||H||} \in \tilde{\mathcal{H}}(M).\]
By Lemma~\ref{lem:period}, $\mathcal{T}_{\tilde{H}}  \geq ||H|| \mathcal{T}_H$. Since $H$ is admissible, $\mathcal{T}_H \geq 1$, so $\mathcal{T}_{\tilde{H}} \geq ||H||$.

Taking supremum over $\mathcal{H}_{ad}(M, \omega)$,
\[\textrm{sup} \{\mathcal{T}_{\tilde H}\big| \, H \in \mathcal{H}_{ad}(M, \omega)\} \geq \textrm{sup} \{H_{\max} -H_{\min} \big| \, H \in \mathcal{H}_{ad}(M, \omega)\} =  c_{\textrm{HZ}}(M,\omega).\]

Since $\{\mathcal{T}_{\tilde H} \big| \, H \in \mathcal{H}_{ad}(M, \omega)\} \subset \{ \mathcal{T}_{H} \big| \, H \in \tilde{\mathcal{H}}(M)\}$, we conclude that
\[\textrm{sup} \{\mathcal{T}_{H} \big| \, H \in \tilde{\mathcal{H}}(M)\} \geq \textrm{sup} \{\mathcal{T}_{\tilde H}\big| \, H \in \mathcal{H}_{ad}(M, \omega)\} \geq  c_{\textrm{HZ}}(M,\omega).\]
Conversely, for any $\tilde{H} \in \tilde{\mathcal{H}}(M)$ such that $\mathcal{T}_{\tilde{H}} >0$, let $H:= \mathcal{T}_{\tilde{H}}  \tilde{H}$. By Lemma~\ref{lem:period}, $\mathcal{T}_{H} \geq  \frac{\mathcal{T}_{\tilde{H}}}{\mathcal{T}_{\tilde{H}}} =1.$
Thus, $H\in \mathcal{H}_{ad}(M, \omega)$ and $||H || = \mathcal{T}_{\tilde{H}}$.  This implies that
\[\{\mathcal{T}_{\tilde{H}}:\tilde{H} \in \tilde{\mathcal{H}}(M)\} \subset \{H_{\max} -H_{\min} \big| \, H \in \mathcal{H}_{ad}(M, \omega) \}.\]

Therefore, \[\textrm{sup} \{\mathcal{T}_{\tilde{H}} \big| \, \tilde{H} \in \tilde{\mathcal{H}}(M)\} \leq  c_{\textrm{HZ}}(M,\omega).\]

We conclude that  \[c_{\textrm{HZ}}(M,\omega) = \textrm{sup} \{ \mathcal{T}_{H} \big| \, H \in \tilde{\mathcal{H}}(M)\}.\]
\end{proof} 

For a Hamiltonian circle action on a symplectic manifold, its moment map does not attain its max and min on an open set, so it is not an admissible function. We now show that we can always modify the moment map of a Hamiltonian $S^1$-action to an admissible function while keeping track of its magnitude, i.e. the difference between its max and its min. In \cite{HS17}, the authors modified the moment map of a semi-free Hamiltonian $S^1$-action to get an admissible function. Specifically, they modified the function locally in an $S^1$-invariant neighborhood of a minimal fixed point. The similar technique will not work here, as their modification is based on the fact that near a minimal fixed point, the isotropy weights are all $1$. Instead, Lemma 4.0.1 in \cite{Bimmermann} does the work. We remark that in our proof we need to first divide the moment map by a constant to ensure that the non-constant periodic orbits have periods at least $1$.

\begin{lemma}\label{lemma:modification}
    Let $(M,\omega)$ be a compact, connected symplectic manifold. Let $S^1$ act effectively on $(M,\omega)$ with a moment map $\Phi:M \to \R$. Let $\mathcal{T} = \max\{ \, |\textrm{stab}_p|:\, p \in M \setminus M^{S^1}\}$. Then there exists an admissible function $H$ such that $H_{\max} -H_{\min} = \frac{1}{\mathcal{T}}(\Phi_{\max} -\Phi_{\min})$
\end{lemma}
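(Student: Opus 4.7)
The plan is a two-step modification of $\Phi$: first a global rescaling so that every non-constant orbit has period at least $1$, then a carefully localized cutoff near the maximal and minimal level sets to force $H$ to attain its extrema on open sets.

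For the rescaling, set $\tilde\Phi := \Phi/\mathcal{T}$. Since $\Phi$ is the moment map of an effective $S^1 = \R/\Z$-action, the time-$1$ flow of $X_\Phi$ is the identity, and every non-constant periodic orbit of $X_\Phi$ through $p$ has period exactly $1/|\textrm{stab}_p|$. By the definition of $\mathcal{T}$, the infimum of these periods is $1/\mathcal{T}$, so Lemma~\ref{lem:period} (with $a = 1/\mathcal{T}$) shows that every non-constant orbit of $X_{\tilde\Phi}$ has period at least $1$. Note also that $\tilde\Phi_{\max} - \tilde\Phi_{\min} = (\Phi_{\max} - \Phi_{\min})/\mathcal{T}$.

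For the cutoff, work in an $S^1$-equivariant tubular neighborhood $U$ of the fixed submanifold $M^{S^1}_{\max}$, where one has a local normal form $\tilde\Phi = \tilde\Phi_{\max} - Q$ with $Q \geq 0$ an $S^1$-invariant function built from the isotropy weights on the symplectic normal bundle. Choose a smooth nonincreasing cutoff $h : [0,\infty) \to \R$ with $h(q) = \tilde\Phi_{\max}$ for $q \leq q_0$, $h(q) = \tilde\Phi_{\max} - q$ for $q \geq 2q_0$, and $|h'(q)| \leq 1$ throughout. Define $H := h(Q)$ on $U$ and $H := \tilde\Phi$ on $M \setminus U$; by the choice of $h$ these glue smoothly across $\{Q = 2q_0\}$. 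Since $X_H = h'(Q) X_Q = -h'(Q) X_{\tilde\Phi}$, any non-constant orbit of $X_H$ on $\{Q = q\}$ is a reparametrization of the corresponding $X_{\tilde\Phi}$-orbit with period scaled by $1/|h'(q)| \geq 1$, so its period remains $\geq 1$. Perform the analogous construction with a nondecreasing cutoff near $M^{S^1}_{\min}$. The resulting $H$ is constant on open neighborhoods of both extrema, has no non-constant periodic orbits of period less than $1$, and satisfies $H_{\max} - H_{\min} = \tilde\Phi_{\max} - \tilde\Phi_{\min}$ by design.

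The main obstacle is establishing the equivariant normal form $\tilde\Phi = \tilde\Phi_{\max} - Q$ on a full tubular neighborhood of the (possibly higher-dimensional) fixed submanifold $M^{S^1}_{\max}$, together with the uniform lower bound of $1$ on the periods of $X_Q$-orbits along each level $\{Q = q\}$. This last bound ultimately follows because every isotropy weight on the normal bundle is, in absolute value, at most $\mathcal{T}$, which is immediate from the defining property of $\mathcal{T}$ as the largest stabilizer order on $M \setminus M^{S^1}$. The author's reference to Lemma 4.0.1 of \cite{Bimmermann} is exactly a packaged form of this equivariant cutoff, and the rest of the argument reduces to invoking it near the two extremal fixed submanifolds.
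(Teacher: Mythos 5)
Your strategy coincides with the paper's: first rescale $\Phi$ by $1/\mathcal{T}$, using the fact that the orbit through $p$ has period $1/|\mathrm{stab}_p|$ so that all non-constant periods become at least $1$, and then flatten the function near its extrema by post-composing with a map of slope at most $1$. The first step is identical to the paper's. The second step is also, despite appearances, the same construction: since $Q=\tilde\Phi_{\max}-\tilde\Phi$ on $U$, your $H$ is globally of the form $g\circ\tilde\Phi$ for a single real function $g$, which is exactly the paper's $H=f\circ\Psi$. The equivariant normal form, the symplectic normal bundle, and the bound on the isotropy weights are never actually used in your argument, because the identity $X_H=g'(\tilde\Phi)\,X_{\tilde\Phi}$ (with $g'$ constant along each orbit) already shows that every $X_H$-orbit is an $X_{\tilde\Phi}$-orbit slowed down by the factor $1/|g'|\ge 1$.

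There is, however, a concrete defect: the cutoff you specify does not exist. You require $h(q)=\tilde\Phi_{\max}$ for $q\le q_0$, $h(q)=\tilde\Phi_{\max}-q$ for $q\ge 2q_0$, and $|h'|\le 1$; but then $h$ must decrease by $2q_0$ over the interval $[q_0,2q_0]$ of length $q_0$, so $h'=-2$ somewhere by the mean value theorem. (The paper's own $f$, required to climb from $0$ to $\Psi_{\max}-\Psi_{\min}$ over an interval of that same length while staying flat near both ends with $f'<1$, fails for the identical reason.) Any $g$ with $0\le g'\le 1$ that is constant near both ends of $[\tilde\Phi_{\min},\tilde\Phi_{\max}]$ forces $H_{\max}-H_{\min}<\tilde\Phi_{\max}-\tilde\Phi_{\min}$; the loss can be made smaller than any $\epsilon>0$, which is all that Theorem~\ref{thm:main} actually needs since $c_{\textrm{HZ}}$ is a supremum, but it does not give the exact equality asserted in the lemma. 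To obtain exact equality one must make a genuinely local modification in the equivariant normal form near $M^{S^1}_{\max}$ and $M^{S^1}_{\min}$ --- replacing $\tilde\Phi$ there by a function whose Hamiltonian flow is a different circle action on the normal fibers, with periods controlled by the isotropy weights rather than by $|h'|$ --- which is what \cite{HS17} and Lemma 4.0.1 of \cite{Bimmermann} supply. Your sketch gestures at this mechanism in the final paragraph but then collapses it back to a mere reparametrization of $\tilde\Phi$, where it cannot work.
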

\begin{proof}
    Let $\Psi = \frac{\Phi}{\mathcal{T}}$. We first show that the non-constant periodic orbits of $X_\Psi$ are of period at least $1$. By Lemma~\ref{lem:period}, it suffices to show that the non-constant periodic orbits of $X_\Phi$ are of period at least $\frac{1}{\mathcal{T}}$.

    For any free orbit, its period is $1$ which is greater than or equal to $\frac{1}{\mathcal{T}} \geq 1$.

    Suppose $\mathcal{O}_p$ is a non-free orbit with $\textrm{stab}_p \cong \Z/k\Z$ for some $k > 1$. Then its period is $\frac{1}{k}$. Since $\mathcal{T} \geq k$, we conclude that $\frac{1}{k} \geq \frac{1}{\mathcal{T}}$.

    Now, we modify $\Psi$ so that it achieves its maximum (and its minimum, respectively) on an open set. Let $f: [\Psi_{\min},\Psi_{\max}] \to [0,\infty)$ be a smooth function that satisfies the following properties:
    \begin{itemize}
        \item[(i)] there exists an $\epsilon > 0$ such that $f(x) = 0$ for $x \in [\Psi_{\min},\Psi_{\min}+\epsilon)$ and $f(x) =\Psi_{\max}-\Psi_{\min}$ for $x \in (\Psi_{\max}-\epsilon,\Psi_{\max}]$.
        \item[(ii)] $0 \leq f'(x) < 1$.
    \end{itemize}

    Let $H = f \circ \Psi$. By item (i), $H$ attains its maximum and minimum on open sets. Since each non-constant periodic orbit of $\Psi$ has period at least $1$, by item (ii), each non-constant periodic orbit of $X_H$ has period at least $\frac{1}{f'(x)} \geq 1$. Hence, $H$ is an admissible function and $H_{\max} -H_{\min} = \Psi_{\max}-\Psi_{\min} =  \frac{1}{\mathcal{T}}(\Phi_{\max} -\Phi_{\min})$.
\end{proof}

\section{Examples}\label{s4}

In this section, we describe an explicit algorithm to find the lower bound of the Hofer-Zehnder capacity provided in Theorem~\ref{thm:main} for $4$-dimensional symplectic toric manifolds. The technical difficulty is to find an explicit formula for the number of lattice points in the interior of a simple lattice polytope. The computation in dimension 4 is feasible thanks to the Pick's Theorem.

\begin{theorem}[Pick's Theorem]\label{thm:pick}
Let $\Delta \subset \mathbb{R}^2$ be a convex lattice polygon. Let $A$ denote the area of $\Delta$, $i$ denote the number of lattice points in the interior of the polygon, $b$ denote the number of lattice points on the boundary. Then
\[A = i + \frac{b}{2} - 1.\]
\end{theorem}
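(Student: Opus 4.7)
The plan is to prove Pick's theorem in three stages: establish additivity under edge-gluing of lattice polygons, reduce to primitive lattice triangles by triangulating $\De$, and verify the formula for a single primitive triangle.

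First I would prove an additivity lemma: if $P_1, P_2$ are lattice polygons meeting along a common lattice segment containing exactly $s \geq 2$ lattice points, and $P = P_1 \cup P_2$, then Pick's formula holds for $P$ if and only if it holds for both $P_1$ and $P_2$. The bookkeeping is straightforward once one notes that the $s-2$ interior points of the shared segment become interior points of $P$ while its two endpoints remain on $\partial P$. This yields $i = i_1 + i_2 + (s-2)$ and $b = b_1 + b_2 - 2s + 2$, and a direct substitution together with $A = A_1 + A_2$ matches the formulas on both sides.

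Next I would triangulate $\De$ using only its lattice points, iteratively refining any triangle that contains an extra lattice point, until every piece is a \emph{primitive} lattice triangle (three lattice points, all at vertices). Repeated application of the additivity lemma then reduces everything to verifying Pick's formula for such a triangle $T$, which amounts to the single identity $A(T) = 0 + 3/2 - 1 = 1/2$. For this, I would form the parallelogram $\Pi$ spanned by the two edge vectors $u,v$ emanating from one vertex of $T$; then $\Pi$ is a primitive lattice parallelogram and $A(T) = \tfrac{1}{2}|\det(u,v)|$. The standard argument that $\{u,v\}$ is a $\Z$-basis of $\Z^2$ whenever $\Pi$ has no additional lattice points (any missing lattice point would yield a nontrivial fractional $\Z$-linear combination landing in $\Z^2$, producing an interior lattice point of $\Pi$) forces $|\det(u,v)| = 1$.

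The main (though routine) obstacle is justifying that a primitive-triangle subdivision exists using only the lattice points already contained in $\De$. This is handled by induction on $i + b$: if some triangle in the current triangulation contains an extra lattice point $q$, connect $q$ to a visible vertex to subdivide, strictly decreasing the complexity. Convexity of $\De$ ensures the initial triangulation from one vertex is valid, after which the argument is purely combinatorial.
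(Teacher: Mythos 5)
The paper does not actually prove Pick's Theorem: it is quoted as a classical result and used only as a tool in Corollary~\ref{cor:Det}, so there is no in-paper proof to compare against. Your argument is the standard and correct proof: the additivity bookkeeping checks out (with $i = i_1+i_2+(s-2)$ and $b = b_1+b_2-2s+2$ one gets $i + \frac{b}{2} - 1 = (i_1+\frac{b_1}{2}-1)+(i_2+\frac{b_2}{2}-1)$, matching $A = A_1+A_2$), the reduction to primitive triangles by fan triangulation and refinement is sound, and the base case reduces to $|\det(u,v)|=1$. Two spots deserve slightly more care. First, you assert that the parallelogram $\Pi$ spanned by the edge vectors of a primitive triangle $T$ is itself primitive; this needs the observation that $\Pi = T \cup T'$ where $T'$ is the image of $T$ under the point reflection through the midpoint of the diagonal, a lattice-preserving map, so $T'$ and hence $\Pi$ inherit primitivity from $T$. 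Second, in the basis argument, reducing a lattice point $w \notin \Z u + \Z v$ modulo $\Z u + \Z v$ produces a nonzero lattice point in the half-open parallelogram that need not be \emph{interior}: it could lie on an open edge of $\Pi$, and that case is excluded because $u$ and $v$ are primitive vectors (which again follows from $T$ having no boundary lattice points besides its vertices). With those two routine points filled in, the proof is complete.
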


\begin{corollary}\label{cor:Det}
    Let $\De \subset \R^2$ be a Delzant polygon. Let $E$ be an edge of $\De$ with the primitive outward normal vector $v$. For any primitive vector $u \in \Z^2 \setminus\{\pm v\}$, $k^u_E = |\det(u,-v)|$.
    In particular, if $v_1,\ldots,v_d$ are primitive outward normal vectors of edges of $\De$, then 
    \[m_u(\De) = \max\{|\det(u,-v_1)|,...,|\det(u,-v_d)|\}.\]
\end{corollary}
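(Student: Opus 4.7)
The plan is to invoke Pick's Theorem on an explicit parallelogram. In dimension $n=2$ the index set $J_E$ has a single element, so if $v$ denotes the primitive outward normal to $E$, then
\[P^u_E = \{tu - t_1 v : 0 \leq t, t_1 \leq 1\}\]
is the parallelogram with vertex set $\{0,\, u,\, -v,\, u-v\}$. Since $u$ and $v$ are primitive in $\Z^2$, the hypothesis $u \neq \pm v$ forces $u$ and $v$ to be linearly independent (otherwise primitivity forces $u = \pm v$), so $P^u_E$ is non-degenerate of area exactly $|\det(u,-v)| \geq 1$.

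Next I would count boundary lattice points. Each of the four edges of $P^u_E$ has the form $\{p + tw : 0 \leq t \leq 1\}$ with $w \in \{u,-v\}$ a primitive lattice vector; a primitive segment contains no lattice points besides its two endpoints, so the four vertices are the only boundary lattice points and $b = 4$. Pick's Theorem (Theorem~\ref{thm:pick}) then yields
\[i = A - \tfrac{b}{2} + 1 = |\det(u,-v)| - 1,\]
so $k^u_E = i + 1 = |\det(u,-v)|$, which is the first assertion.

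For the ``in particular'' statement, I would expand $m_u(\De) = \max_j k^u_{E_j}$ over the edges of $\De$, each indexed by its normal $v_j$. Whenever $u \neq \pm v_j$ the first part of the corollary gives $k^u_{E_j} = |\det(u,-v_j)|$, so the two maxima agree term by term. The only discrepancy arises when $u = \pm v_j$ for some $j$: then $k^u_{E_j} = 1$ (the parallelogram degenerates and has empty interior) while $|\det(u,-v_j)| = 0$. However, any Delzant polygon has at least three edges, so there is some other $v_\ell$ not parallel to $u$, for which $|\det(u,-v_\ell)| \geq 1$ by primitivity. Hence the spurious $0$ on the right-hand side is always dominated by another term in the max, and $m_u(\De) = \max_j |\det(u,-v_j)|$ in all cases.

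The whole argument is essentially a one-line application of Pick; I do not expect any real obstacle, only the bookkeeping in the degenerate case $u = \pm v_j$ to make the ``in particular'' clause correct as stated.
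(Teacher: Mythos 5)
Your proposal is correct and follows essentially the same route as the paper: Pick's Theorem applied to the parallelogram $P^u_E$ with $b=4$ boundary lattice points (from primitivity of $u$ and $v$) and $A=|\det(u,-v)|$, plus the same observation that a degenerate edge with $u=\pm v_j$ contributes $k^u_{E_j}=1$ versus $|\det(u,-v_j)|=0$, both dominated by a non-parallel edge. Your justification of $b=4$ via primitive segments is slightly more explicit than the paper's, but the argument is the same.
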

\begin{proof}
    $P^u_E = \{tu-sv: t,s \in [0,1]\}$ is a parallelogram whose vertices are lattice points. By Theorem~\ref{thm:pick}, $k^u_E = |\Z^n \cap \textrm{int}(P^u_E)|+1 = i+1 = A-\frac{b}{2}+2$. Since $u,v$ are primitive, there are four lattice points on the boundary of $P^u_E$, namely $0,u,-v,u-v$. Hence, $b = 4$. On the other hand, $A= |\det(u,-v)|$, so \[k^u_E = |\det(u,-v)|- \frac{4}{2}+2 =|\det(u,-v)|. \]

    If $u,v_i$ are linearly dependent, then there exists some $j$ such that $u,v_j$ are linearly independent. Hence, $k^u_{E_j} = |\det(u,-v_j)| \geq 1 = k^u_{E_i}$, which implies \[\max\{k^u_{E_1},\ldots,k^u_{E_d}\} = \max\{|\det(u,-v_1)|,...,|\det(u,-v_d)|\}.\]
\end{proof}

\begin{example}\label{eg:P2}
    Let $c>0$. Consider the symplectic toric manifold $(\mathbb{CP}^2, c\omega, T^2,\Phi)$ whose moment image $\De$ is given in Figure~\ref{fig:P2}:

\begin{figure}
    \centering
    \begin{tikzpicture}
      \fill[fill=black] (0,0) node[below left] {$A= (0,0)$} circle (2pt);
      \draw[->] (0,-3) -- (0,0) -- (0,3);
      \draw[->] (-3,0) -- (0,0) -- (3,0);
      \fill[fill=black] (2,0) node[anchor = north] {$B=(c,0)$}  circle (2pt);
      \fill[fill=black] (0,2) node[anchor = east] {$C=(0,c)$} circle (2pt);
      \filldraw[color= black, fill= gray, very thick] (2,0) -- (0,0) -- (0,2) -- (2,0);
      \draw[blue] (-3,0.5) -- (0,2) -- (2,3);
      \draw[blue] (-2,-2) -- (2,0) -- (3,0.5);
      \draw[red]  (3,-3) -- (2,0) -- (1,3);
      \draw[red]  (1,-3) -- (0,0) -- (-1,3);
      \draw[orange] (-2,3) -- (0,2) -- (4,0);
      \draw[orange] (-3,1.5) -- (0,0) -- (3,-1.5);
    \end{tikzpicture}
    \caption{The moment image of $\C\P^2$.}
    \label{fig:P2}
\end{figure}

We denote the primitive outward normal vectors by $v_1 = (0,-1), v_2 = (1,1), v_3 = (-1,0)$. Let $u = (p,q) \in \Z^2 \setminus\{0\}$ be a primitive vector. $|\det (u, -v_1)| = |p|$, $|\det (u, -v_2)| = |p-q|, |\det (u, -v_3)|  = |q|$. By Corollary~\ref{cor:Det}, $m_u(\De) = \max\{|p|,|p-q|,|q|\}$.

{\bf Case 1.} $pq <0$. Then we can assume without loss of generality that $p>0,q<0$, since multiplying $u$ by $-1$ will not change $m_u(\De)$. Then $m_u(\De) = p-q$. \[\mathcal{T}_u(\De) = \frac{1}{p-q}\big(\smash{\max_{(x,y) \in \Delta}} (px+qy) - \smash{\min_{(x,y) \in \Delta}} (px+qy)\big).\] 

If we think of the geometric meaning of $px+qy$ and write $b=px+qy$, then $b$ is maximized when the y-intercept of the line $y = -\frac{p}{q}x+y_0$ is minimized.

As the blue line in the picture shows, $b$ is maximized at $B$ and minimized at $C$. So $\mathcal{T}_u(\De) = \frac{1}{p-q}(cp-cq) = c$.

{\bf Case 2.} $pq > 0$. Then we can assume without loss of generality that $p,q>0$.

In this case, $m_u(\De) = \max(p,q)$. 

We want to maximize and minimize $px+qy$ on $\Delta$. Similarly, when $p>q$, as the red line shows, it's maximized at $B$ and minimized at $A$. So $\mathcal{T}_u(\De) = \frac{1}{p}(cp-0) = c$. The same argument works for $p<q$ (the orange line) and we still get $\mathcal{T}_u(\De) = c$.

{\bf Case 3.} $u=(0,\pm 1)$ or $u= (\pm 1,0)$. It is straightforward to check that $\mathcal{T}_u(\De) =c$.

By Theorem~\ref{thm:main}, we get the estimate $c_{\textrm{HZ}}(\mathbb{CP}^2,c\omega) \geq c$. 
\end{example}

\begin{example}\label{eg:P1P1}
For $a,b \in \R_{>0}$, consider $(S^2 \times S^2, a\, \omega \oplus b\, \omega, T^2)$ whose moment image is $\De = [0,a] \times [0,b]$, where $\omega=\frac{1}{2}c_1(TS^2)$. We denote the primitive outward normal vectors by $v_1 = (-1,0), v_2 = (0,-1), v_3 = (1,0), v_4 = (0,1)$.

For any primitive vector $u = (p,q) \in \Z^2 \setminus\{0\}$, by Corollary~\ref{cor:Det}, $m_u(\De) = \max\{|p|,|q|\}$.

A computation similar to the previous example shows that 
\[\mathcal{T}_u(\De) = \frac{a|p|+b|q|}{\max\{|p|,|q|\}}.\]

Notice that $\mathcal{T}_u(\De) \leq a+b$ for all primitive vectors $u$. Furthermore, for $u = (1,1)$, $\mathcal{T}_u(\De) = a+b$. Hence, $w_T(\De)=a+b$ and $c_{\textrm{HZ}}(S^2 \times S^2, a\, \omega \oplus b\, \omega) \geq a+b$. 
\end{example}

\begin{remark}
    In \cite{HS17}, the authors proved that if $(M,\omega)$ is a closed Fano symplectic manifold with a semi-free $S^1$-action and the Hamiltonian function $H:M \to \R$ attains its max at a single point, then $c_{\textrm{HZ}}(M,\omega) = H_{\max} - H_{\min}$. In particular, they showed that $c_{\textrm{HZ}}(\mathbb{CP}^2,3\omega) =3$ and $c_{\textrm{HZ}}(S^2 \times S^2, a \omega \oplus b \omega) =a+b$. By the conformality of the symplectic capacity, we further obtain that $c_{\textrm{HZ}}(\mathbb{CP}^2,c\omega) =c$ for any $c>0$. Hence, our estimate is sharp in these cases. In fact, our estimate is always sharp in the Fano case.
\end{remark}

\begin{example}
    Given $n \in \Z_{>0}$, let $\mathcal{O}(n) \to \C\P^1$ be the holomorphic line bundle whose first Chern class is $n$. Consider the $n$-th Hirzebruch surface $\mathcal{H}_n : = \P(\mathcal{O} \oplus \mathcal{O}(-n))$. Under a suitable choice of the torus action and the symplectic form, its moment image $\De$ is the trapezoid shown in Figure~\ref{fig:Hn}, where $a,b >0$. The primitive outward normal vectors are $v_1 = (0,-1), v_2 = (1,n), v_3 = (0,1), v_4 =(-1,0)$. 
    
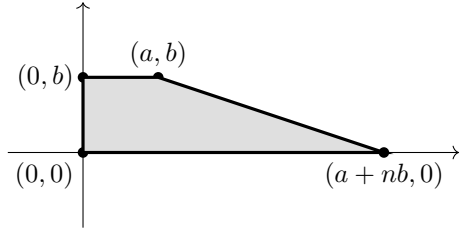
\begin{figure}
    \centering
    \begin{tikzpicture}
      \fill[fill=black] (0,0) node[below left] {$(0,0)$} circle (2pt);
      \draw[->] (0,-1) -- (0,0) -- (0,2);
      \draw[->] (-1,0) -- (0,0) -- (5,0);
      \fill[fill=black] (4,0) node[anchor = north] {$(a+nb,0)$}  circle (2pt);
      \fill[fill=black] (0,1) node[anchor = east] {$(0,b)$}  circle (2pt);
      \fill[fill=black] (1,1) node[anchor = south] {$(a,b)$} circle (2pt);
      \filldraw[color= black, fill= gray!25, very thick] (0,1) -- (1,1) -- (4,0) --  (0,0) -- (0,1);
    \end{tikzpicture}
    \caption{The moment image of the $n$-th Hirzebruch surface}
    \label{fig:Hn}
\end{figure}
    
    For any primitive vector $u = (p,q) \in \Z^2 \setminus \{0\}$, by Corollary~\ref{cor:Det}, $m_u(\De)=\max\{|p|,|np-q|,|q|\}$. 

    When $n = 1$, $m_u(\De)=\max\{|p|,|p-q|,|q|\}$. A discussion similar to Example~\ref{eg:P2} shows that $w_T(\De) = a+b$. Hence, $ c_{\textrm{HZ}}(\mathcal{H}_1,\omega_{\Delta}) \geq a+b$. For example, when $(a,b) = (1,2)$, this toric manifold is Fano and the estimate is sharp.

    For $n \geq 2$, we put the data in the table below.

\begin{table}[h!]
  \centering
  \begin{tabular}{|c|c|c|}
    \hline
    $u$   & $m_u(\De)$ & $\mathcal{T}_u(\De)$ \\[0.5ex] 
    \hline
    $(0,\pm 1)$ & $1$ & $b$\\
    \hline
    $(\pm 1,0)$ & $n$ & $\frac{a+nb}{n}$\\
    \hline
    $\frac{p}{q}< 0$ & $|np-q|$ & $\frac{|pa+(np-q)b|}{|np-q|}$\\ 
    \hline
    $0 <\frac{p}{q} \leq \frac{1}{n}$ & $|q|$ & $\frac{|p|a+|q|b}{|q|}$\\
    \hline
    $\frac{1}{n} <\frac{p}{q} \leq \frac{2}{n}$ & $|q|$ & $\frac{(a+nb)|p|}{|q|}$\\
    \hline
    $\frac{p}{q} > \frac{2}{n}$ & $|np-q|$ & $\frac{(a+nb)|p|}{|np-q|}$\\
    \hline
  \end{tabular}
  \label{tab:table}
\end{table}
When $\frac{p}{q} < 0$, $$\frac{|pa+(np-q)b|}{|np-q|} \leq  \frac{|p|a}{|np-q|} + b =  \frac{a}{|n-\frac{q}{p}|} +b < \frac{a}{n} + b$$

When $0 <\frac{p}{q} \leq \frac{1}{n}$, $$\frac{|p|a+|q|b}{|q|}  = a \frac{p}{q}+b \leq \frac{a}{n} +b$$

When $\frac{1}{n} <\frac{p}{q} \leq \frac{2}{n}$, \begin{equation}\label{eqn:Hn}
    \frac{(a+nb)|p|}{|q|} \leq \frac{2a}{n}+2b
\end{equation}

When $\frac{p}{q} > \frac{2}{n}$,  $$\frac{(a+nb)|p|}{|np-q|} = \frac{a+nb}{|n -\frac{q}{p}|} < \frac{2}{n}(a+nb) =\frac{2a}{n}+2b$$

Furthermore, when $(p,q) = \left(\frac{2}{\gcd(2,n)}, \frac{n}{\gcd(2,n)}\right)$, the inequality~(\ref{eqn:Hn}) is an equality. Hence, $w_T(\De) = \frac{2a}{n}+2b$. 

Therefore, for $n \geq 2$, $c_{\textrm{HZ}}(\mathcal{H}_n,\omega_{\Delta}) \geq \frac{2a}{n}+2b$. $c_{\textrm{HZ}}(\mathcal{H}_n,\omega_{\Delta})$ is not known in this case.

\end{example}

\begin{remark}
    Though the computation is straightforward in dimension $2$ (i.e. when we have symplectic toric $4$-manifolds), it is not known whether there is a closed formula for the number of interior lattice points of a simple lattice polytope in higher dimension. Brion and Vergne~\cite{BV97} gave a formula in terms of the derivative of the Todd operator. It is not clear to us how this formula could be applied to extract the exact numbers.
\end{remark}

\bibliographystyle{amsplain}
\bibliography{ref}

\providecommand{\bysame}{\leavevmode\hbox to3em{\hrulefill}\thinspace}
\providecommand{\MR}{\relax\ifhmode\unskip\space\fi MR }
\providecommand{\MRhref}[2]{%
  \href{http://www.ams.org/mathscinet-getitem?mr=#1}{#2}
}
\providecommand{\href}[2]{#2}
\begin{thebibliography}{10}

\bibitem{Atiyah1982}
M.~F. Atiyah, \emph{Convexity and commuting hamiltonians}, Bulletin of the London Mathematical Society \textbf{14} (1982), no.~1, 1--15.

\bibitem{Bimmermann}
J.~Bimmermann, \emph{On the hofer-zehnder capacity of twisted tangent bundles}, Ph.D. thesis, Heidelberg University, 2023.

\bibitem{BV97}
Michel Brion and Mich\`ele Vergne, \emph{Lattice points in simple polytopes}, J. Amer. Math. Soc. \textbf{10} (1997), no.~2, 371--392. \MR{1415319}

\bibitem{De88}
T.~Delzant, \emph{{H}amiltoniens p\'{e}riodiques et image convex de l'application moment}, Bull. Soc. Math. France \textbf{116} (1988), 315--339.

\bibitem{Density}
Yu~Du, Gabriel Kosmacher, Yichen Liu, Jeff Massman, Joseph Palmer, Timothy Thieme, Jerry Wu, and Zheyu Zhang, \emph{Packing densities of {D}elzant and semitoric polygons}, SIGMA Symmetry Integrability Geom. Methods Appl. \textbf{19} (2023), Paper No. 081, 42. \MR{4660165}

\bibitem{EH89}
I.~Ekeland and H.~Hofer, \emph{Symplectic topology and {H}amiltonian dynamics}, Math. Z. \textbf{200} (1989), no.~3, 355--378. \MR{978597}

\bibitem{FPP18}
A.~Figalli, J.~Palmer, and \'A. Pelayo, \emph{Symplectic {$G$}-capacities and integrable systems}, Ann. Sc. Norm. Super. Pisa Cl. Sci. (5) \textbf{18} (2018), no.~1, 65--103.

\bibitem{GR85}
M.~Gromov, \emph{Pseudo holomorphic curves in symplectic manifolds}, Invent. Math. \textbf{82} (1985), no.~2, 307--347. \MR{809718}

\bibitem{GS82}
V.~Guillemin and S.~Sternberg, \emph{Convexity properties of the moment mapping}, Inventiones Mathematicae \textbf{67} (1982), no.~3, 491--513.

\bibitem{HZ90}
H.~Hofer and E.~Zehnder, \emph{A new capacity for symplectic manifolds}, Analysis, et cetera, Academic Press, Boston, MA, 1990, pp.~405--427. \MR{1039354}

\bibitem{HS17}
Taekgyu Hwang and Dong~Youp Suh, \emph{Symplectic capacities from {H}amiltonian circle actions}, J. Symplectic Geom. \textbf{15} (2017), no.~3, 785--802. \MR{3696591}

\bibitem{KKP07}
Yael Karshon, Liat Kessler, and Martin Pinsonnault, \emph{A compact symplectic four-manifold admits only finitely many inequivalent toric actions}, J. Symplectic Geom. \textbf{5} (2007), no.~2, 139--166. \MR{2377250}

\bibitem{Lu}
Guangcun Lu, \emph{Symplectic capacities of toric manifolds and related results}, Nagoya Math. J. \textbf{181} (2006), 149--184. \MR{2210713}

\bibitem{Pelayo06}
{\'A}~Pelayo, \emph{Toric symplectic ball packing}, Topology and its Appl. \textbf{157} (2006), 3633--3644.

\bibitem{Pelayo07}
{\'A}.~Pelayo, \emph{Topology of spaces of equivariant symplectic embeddings}, Proc. Amer. Math. Soc. \textbf{135} (2007), no.~1, 277--288.

\bibitem{PS08}
{\'A}~Pelayo and B.~Schmidt, \emph{Maximal ball packings of symplectic-toric manifolds}, Intern. Math. Res. Not. (2008), 24p, ID rnm139.

\end{thebibliography}

\end{document}